\newtheorem{theorem}{Theorem}[section]
\newtheorem{lemma}[theorem]{Lemma}
\newtheorem{definition}{Definition}
\newtheorem{corollary}[theorem]{Corollary}
\newtheorem{proposition}[theorem]{Proposition}
\newtheorem{problem}{Problem}
\newtheorem{remark}{Remark}
\definecolor{subsectioncolor}{RGB}{1,1,0}
\newcommand{\map}[3]{#1: #2 \rightarrow #3}
\newcommand{\setdef}[2]{\{#1 \; : \; #2\}}
\newcommand{\real}{\mathbb{R}}
\newcommand{\integerspos}{\mathbb{Z}_{> 0}}
\newcommand{\Cc}{\mathcal{C}}
\newcommand{\Dc}{\mathcal{D}}
\newcommand{\argmin}[2] {\mathrm{arg}\min_{#1}#2}
\DeclareSymbolFont{bbold}{U}{bbold}{m}{n}
\DeclareSymbolFontAlphabet{\mathbbold}{bbold}
\newcommand{\norm}[1]{\left\lVert#1\right\rVert}
\newcommand{\nom}{\operatorname{nom}}
\newcommand{\safe}{\operatorname{safe}}
\newcommand{\stable}{\operatorname{stable}}
\newcommand\oprocendsymbol{\hbox{$\bullet$}}
\newcommand\oprocend{\relax\ifmmode\else\unskip\hfill\fi\oprocendsymbol}
\newcommand*{\QEDA}{\hfill\ensuremath{\blacksquare}}%
\newcommand\xqed[1]{%
  \leavevmode\unskip\penalty9999 \hbox{}\nobreak\hfill
  \quad\hbox{#1}}
\newcommand\demo{\xqed{$\bullet$}}
\newcommand\problemfinal{\xqed{$\triangle$}}
\newcounter{countitems}
\newcounter{nextitemizecount}
\newcommand{\setupcountitems}{%
  \stepcounter{nextitemizecount}%
  \setcounter{countitems}{0}%
  \preto\item{\stepcounter{countitems}}%
}
\newcommand{\computecountitems}{%
  \edef\@currentlabel{\number\c@countitems}%
  \label{countitems@\number\numexpr\value{nextitemizecount}-1\relax}%
}
\newcommand{\nextitemizecount}{%
  \getrefnumber{countitems@\number\c@nextitemizecount}%
}
\newcommand{\previtemizecount}{%
  \getrefnumber{countitems@\number\numexpr\value{nextitemizecount}-1\relax}%
}
\computecountitems\ifnumcomp{\previtemizecount}{>}{4}{\end{multicols}}{}}
\newcommand{\longthmtitle}[1]{\mbox{}\emph{(#1):}}
\newcommand{\comment}[1]{} 
\newcolumntype{P}[1]{>{\centering\arraybackslash}p{#1}}
\title{\bf Optimization-Based Safe Stabilizing Feedback
  \\
  with Guaranteed Region of Attraction}
\author{Pol Mestres \qquad Jorge Cort{\'e}s \thanks{P. Mestres and
    J. Cort\'es are with the Department of Mechanical and Aerospace
    Engineering, University of California, San Diego,
    \{pomestre,cortes\}@ucsd.edu. This work was supported by NSF Award
    IIS-2007141.}}%
\begin{document}

\maketitle
\thispagestyle{empty}
\pagestyle{empty}

\begin{abstract}
  This paper proposes an optimization with penalty-based feedback
  design framework for safe stabilization of control affine
  systems. Our starting point is the availability of a control
  Lyapunov function (CLF) and a control barrier function (CBF)
  defining affine-in-the-input inequalities that certify,
  respectively, the stability and safety objectives for the dynamics.
  Leveraging ideas from penalty methods for constrained optimization,
  the proposed design framework imposes one of the inequalities as a
  hard constraint and the other one as a soft constraint. We study the
  properties of the closed-loop system under the resulting feedback
  controller and identify conditions on the penalty parameter to
  eliminate undesired equilibria that might arise. Going beyond the
  local stability guarantees available in the literature, we are able
  to provide an inner approximation of the region of attraction of the
  equilibrium, and identify conditions under which the whole safe set
  belongs to it. Simulations illustrate our results.
\end{abstract}

\begin{IEEEkeywords}
  Safety-critical control, control barrier functions, penalty methods.
\end{IEEEkeywords}

\section{Introduction}\label{sec:introduction}

\IEEEPARstart{S}afety-critical control has garnered a lot of attention
in the controls and robotics communities motivated by applications to
many different classes of engineered and natural systems. Safety
refers to the ability to ensure by design that the evolution of the
dynamics stays within a desired set.  Control barrier functions (CBFs)
are a useful tool to deal with safety specifications that do not
require addressing the difficult task of computing the system's
reachable set.  In many scenarios, safety must be achieved together
with some stabilization goal, and this raises interesting challenges
for control design in order to ensure that both are achieved via
feedback controllers that are easily implemented and have appropriate
smoothness guarantees. These challenges motivate us to develop here an
optimization with penalty-based feedback design framework for safe
stabilization of control affine systems.

\subsubsection*{Literature Review}
We rely on ideas from two different bodies of work.  The first one is
CLFs~\cite{EDS:98}, which have been successfully used in the
control design for stabilization of nonlinear systems. Of particular
interest to this work is the pointwise-minimum norm (PMN)
formula~\cite{RAF-PVK:96a}, that uses a CLF to compute a stabilizing
controller. The second relevant body of work pertains to
CBFs~\cite{PW-FA:07,ADA-SC-ME-GN-KS-PT:19}, whose aim
is to render a certain predefined safe set forward invariant.
However, in applications where both safety and stability must be
certified, CBFs fall short of providing provable stability
guarantees. To tackle this issue,~\cite{MZR-BJ:16} combines a CLF and
a CBF into a so-called CLBF, and then uses Sontag's universal formula
to derive a smooth controller. However, in general it might be
difficult to satisfy the conditions required for the existence of such
a CLBF~\cite{PB-CMK:17}. Another approach is the universal formula for
smooth safe stabilization from~\cite{PO-JC:19-cdc}.  However, this
formula is only applicable in a set where both the CLF and the CBF are
compatible (i.e., there exists a control satisfying their associated
inequalities at every point of the set).  An alternative
approach~\cite{ADA-SC-ME-GN-KS-PT:19} to tackle joint
safety and stability specifications is to combine the CLF and the CBF
in a quadratic program (QP). To guarantee the feasibility of the
program when the functions are not compatible and to avoid the
resulting controller to be non-Lipschitz when they
are~\cite{BJM-MJP-ADA:15}, the stability constraint is often
relaxed. This results in a lack of guarantee of stability, even for
arbitrarily large penalties in the relaxation parameter~\cite{MJ:18}.
Moreover, as shown in~\cite{MFR-APA-PT:21,XT-DVD:21}, this QP
formulation can introduce undesired equilibria beyond the original
equilibrium, which can even be asymptotically stable. This line of
work~\cite{MJ:18,XT-DVD:21} then identifies conditions under which
local stability guarantees of the equilibrium can be given.
Although the region of attraction is not explicitly
  characterized, a strategy similar to the one pursed here could be
  employed. An alternative design, e.g.,~\cite{LW-ADA-ME:17},
assumes a priori knowledge of a CBF and a nominal (possibly unsafe)
stabilizing controller. Then, a safety filter is applied to this
nominal controller. As a result, the filtered controller generally
lacks stability guarantees. The recent paper~\cite{WSC-DVD:21} gives
an estimate of the region of attraction of the closed-loop system
obtained by using such a filtered controller.

\subsubsection*{Statement of Contributions}
We consider the problem of safe stabilization of control affine
systems.  Given a control Lyapunov function and a control barrier
function whose $0$-superlevel set defines an arbitrary, possibly
non-convex safe set, we aim to synthesize a safe, stabilizing feedback
and identify the region of attraction of the origin for the resulting
closed-loop system. In particular, we study under what conditions such
region of attraction contains the safe set. The contributions of this
paper are the following.  Given the safety and stability objectives,
our first contribution designs an optimization with penalty-based
controller that has one of the objectives as a hard constraint and the
other as a soft constraint.  The controller depends on a penalty
parameter that can be tuned to enhance the soft objective at the cost
of reduced optimality, while guaranteeing the satisfaction of the hard
constraint. An advantage of the proposed design is that the controller
is automatically Lipschitz and has a closed-form expression.  Our
second contribution shows that the controller can introduce undesired
equilibrium points different from the origin.  By choosing the penalty
parameter appropriately, and under some technical conditions, these
undesired equilibria can be eliminated. Finally, our third
contribution shows that the proposed controller can be tuned to
provide an inner approximation of the region of attraction of the
origin for the closed-loop system. As a consequence of this analysis,
we provide conditions under which all of the safe set belongs to the
region of attraction of the origin for the closed-loop
system. Simulations on a planar system compare our design with other
approaches in the literature.

\section{Preliminaries on CLFs and CBFs}

This section presents\footnote{We denote by $\integerspos, \real$ and
  $\real_{\geq0}$ the set of positive integers, real, and nonnegative
  real numbers, resp. We write
  $\text{int}(\mathcal{S}), \partial\mathcal{S}$ for the interior and
  the boundary of the set $\mathcal{S}$, resp.  Given $x\in\real^{n}$,
  $\norm{x}$ denotes its Euclidean norm.  Given
  $f:\real^{n}\to\real^{n}$, $g:\real^{n}\to\real^{n\times m}$ and a
  smooth function $W:\real^{n}\to\real$, the notation
  $\map{L_{f}W}{\real^n}{\real}$
  (resp. $\map{L_gW}{\real^n}{\real^{m}}$) denotes the Lie derivative
  of $W$ with respect to $f$ (resp. $g$), that is
  $L_{f}W=\nabla W^{T}f$ (resp.  $\nabla W^{T} g$). We denote by
  $\Cc^{1}(\real^n)$ and $\Cc^{2}(\real^n)$ the set of continuously
  differentiable and twice continuously differentiable functions in
  $\real^{n}$, respectively.  Given $a \in \real^n$ and $b\in \real$,
  let $H$ denote the hyperplane defined by
  $H = \setdef{x\in\real^n}{\langle a, x\rangle =b}$. We denote the
  projection of $v\in\real^{n}$ onto $H$ by
  $P_{H}(v) =v-\frac{\langle a, v\rangle-b}{\norm{a}^2}a$.  A function
  $\beta:\real\to\real$ is of class $\mathcal{K}$ if $\beta(0)=0$ and
  $\beta$ is strictly increasing. If moreover
  $\lim\limits_{t\to\infty}\beta(t)=\infty$, $\beta$ is of class
  $\mathcal{K}_{\infty}$.  A function $V:\real^{n}\to\real$ is
  positive definite if $V(0)=0$ and $V(x)>0$ for $x \neq 0$. Given a
  matrix $M\in\real^{n\times m}$,
  $\ker(M)=\setdef{x\in\real^m}{Mx=\textbf{0}_n}$. Given a square
  matrix $A\in\real^{n \times n}$ with eigenvectors $\{v_j\}_{j=1}^n$
  and corresponding eigenvalues $\{\lambda_j\}_{j=1}^n$, the stable
  subspace of $A$ is defined as
  $\mathcal{V}_{s}(A)=\text{span}(\setdef{v_j}{\Re{(\lambda_j)}<0, \
    j=1,\hdots,n})$, where $\Re{(\lambda_j)}$ denotes the real part of
  $\lambda_{j}$. We denote by $\bar{\lambda}_{\min}(A)$ and
  $\lambda_{\max}(A)$ the smallest non-zero and largest real parts of
  the eigenvalues of~$A$, respectively.}  preliminaries on control
Lyapunov and barrier functions.  Consider a control-affine system
\begin{align}\label{eq:control-affine-sys}
  \dot{x}=f(x)+g(x)u,
\end{align}
where $f:\real^{n}\to\real^{n}$ and $g:\real^{n}\to\real^{n\times m}$
are locally Lipschitz functions, with $x\in\real^{n}$ the state and
$u\in\real^{m}$ the input. Throughout the paper, and without loss of
generality, we assume $f(0)=0$, so that the origin $x=0$ is the
desired equilibrium point of the (unforced) system.

We start by recalling the notion of Control Lyapunov function
(CLF)~\cite{EDS:98,RAF-PVK:96a}.

\begin{definition}\longthmtitle{Control Lyapunov Function}\label{def:clf}
  Given an open set $\mathcal{D}\subseteq\real^{n}$, with
  $0\in\mathcal{D}$, a continuously differentiable function
  $V:\real^{n}\to\real$ is a \textbf{CLF} on $\mathcal{D}$ for 
  system~\eqref{eq:control-affine-sys} if
  \begin{itemize}
  \item $V$ is proper in $\mathcal{D}$, i.e.,
    $\setdef{x\in\mathcal{D}}{V(x)\leq c}$ is a compact set for all
    $c>0$,
  \item $V$ is positive definite,
  \item there exists a continuous positive function
    $W:\real^{n}\to\real$ such that, for each
    $x\in\mathcal{D}\backslash \{0\}$, there exists a control
    $u\in\real^{m}$ satisfying
    \begin{align}\label{eq:clf-ineq}
      L_fV(x)+L_gV(x)u \leq -W(x).
  \end{align}
\end{itemize}
\end{definition}
\medskip

CLFs provide a way to guarantee asymptotic stability of the
origin. Namely, if a Lipschitz controller $u$
satisfies~\eqref{eq:clf-ineq} for all
$x\in\mathcal{D} \setminus \{0\}$, then the origin of the closed-loop
system is asymptotically stable~\cite{EDS:98}.  If $W(x)$
in~\eqref{eq:clf-ineq} is replaced by $\gamma(V(x))$, where $\gamma$
is a class $\mathcal{K}$ function, then such Lipschitz controller
makes the origin exponentially stable. Such controllers can be
synthesized by means of the pointwise minimum-norm (PMN) control
optimization~\cite[Chapter 4.2]{RAF-PVK:96a},
\begin{align*}
  u(x)
  & = \argmin{u \in\real^{m}}{\frac{1}{2}\norm{u}^2}
  \\
  \notag
    & \qquad \text{s.t.~\eqref{eq:clf-ineq} holds}.
\end{align*}
Note that, at each $x\in \real^n$, this is a quadratic program in~$u$.

Next we recall the notion of Control Barrier Function
(CBF)~\cite{ADA-SC-ME-GN-KS-PT:19}. Let $\mathcal{C}\subseteq\real^{n}$ be
a closed set describing the safe states for the
system~\eqref{eq:control-affine-sys}.

\begin{definition}\longthmtitle{Control Barrier
    Function}
  Let $h:\real^{n}\to\real$ be a continuously differentiable function
  such that $ \mathcal{C}=\setdef{x\in\real^n}{h(x)\geq0}$.  The
  function $h$ is a \textbf{CBF} of $\Cc$ for 
  system~\eqref{eq:control-affine-sys} if there exists a class
  $\mathcal{K}_{\infty}$ function $\alpha$ such that, for all
  $x\in \Cc$, there exists a control $u\in\real^{m}$ satisfying
  \begin{align}\label{eq:cbf-ineq}
    L_fh(x)+L_gh(x)u+\alpha(h(x))\geq0.
  \end{align}
\end{definition}
\smallskip

CBFs can be used to guarantee safety, i.e., forward invariance
of~$\mathcal{C}$ under the
dynamics~\eqref{eq:control-affine-sys}. Namely, if a Lipschitz
continuous controller satisfies~\eqref{eq:cbf-ineq} for all
$x\in\mathcal{C}$, then $\mathcal{C}$ is forward
invariant~\cite[Theorem 2]{ADA-SC-ME-GN-KS-PT:19}. Similar to the PMN
controller above, a common design
methodology~\cite{ADA-SC-ME-GN-KS-PT:19} is via the optimization
\begin{align*}
  u(x)
  & = \argmin{u \in\real^{m}}{\frac{1}{2}\norm{u}^2}
  \\
  \notag
    & \qquad \text{s.t.~\eqref{eq:cbf-ineq} holds},
\end{align*}
which results in a Lipschitz controller~\cite[Theorem
2]{XX-PT-JWG-ADA:15}.

When dealing with both the stability and safety of system trajectories
under the dynamics~\eqref{eq:control-affine-sys}, it is important to
note that an input $u$ might satisfy~\eqref{eq:clf-ineq} but
not~\eqref{eq:cbf-ineq}, or vice versa. The following notion, adapted
from~\cite[Definition 2.3]{PO-JC:19-cdc}, captures when the CLF and
the CBF are compatible.

\begin{definition}\longthmtitle{Compatibility of CLF-CBF pair}
  Let $\mathcal{D} \subseteq \real^{n}$ be open, $\Cc \subset \Dc$
  closed, $V$ a CLF on $\mathcal{D}$ and $h$ a CBF of~$\Cc$. Then, $V$
  and $h$ are \textbf{compatible at $x \in \Cc$} if there exists
  $u\in\real^{m}$ satisfying~\eqref{eq:clf-ineq}
  and~\eqref{eq:cbf-ineq} simultaneously. We refer to both functions
  as compatible if $V$ and $h$ are compatible at every point of $\Cc$.
\end{definition}

\section{Problem Statement}\label{sec:problem-statement}

We are interested in designing controllers that are both stabilizing
and safe. We also require them to be Lipschitz in order to guarantee
existence and uniqueness of solutions of the closed-loop system.
Formally, consider a control-affine system of the
form~\eqref{eq:control-affine-sys}.  Let $\map{V}{\real^n}{\real}$ be
a CLF on the open set $\mathcal{D} \subseteq \real^{n}$ and
$\map{h}{\real^n}{\real}$ be a CBF of the closed set
$\Cc \subset \mathcal{D}$.  We assume the origin belongs to
$\mathcal{C}$.  Given the availability of these functions, it seems
reasonable to employ $V$ to ensure the stabilizing aspect of the
controller and $h$ to ensure safety. We also seek to provide formal
characterizations of the region of attraction of the equilibrium for
the resulting closed-loop system.  If $V$ and $h$ are compatible at
every point in the safe set, one option is to find the control through
pointwise optimization with~\eqref{eq:clf-ineq}
and~\eqref{eq:cbf-ineq} as constraints. However,~\cite{BJM-MJP-ADA:15}
gives a counterexample that shows that this pointwise minimization can
result in a non-Lipschitz controller. To remedy this, and also to
extend the design to scenarios where $V$ and $h$ might not be
compatible at some points in the safe set, a popular
approach~\cite{ADA-SC-ME-GN-KS-PT:19} is to relax one of the
inequalities~\eqref{eq:clf-ineq},~\eqref{eq:cbf-ineq} (in
safety-critical applications, the CLF constraint~\eqref{eq:clf-ineq}),
and formulate a QP that penalizes the relaxation parameter:
\begin{align}\label{eq:clf-cbf-qp}
  u(x)
  & = \argmin{(u,\delta)\in\real^{m+1}}{\frac{1}{2}\norm{u}^2+p\delta^2},
  \\
  \notag
    & \qquad \text{s.t.} \quad \eqref{eq:cbf-ineq}, \ L_fV(x)+L_gV(x) u \leq -W(x)+\delta.
\end{align}
Nevertheless, even in the case where the CLF and the CBF are
compatible at all points in the safe set, the resulting controller
might not be stabilizing even for arbitrarily large values of
$p$~\cite{MJ:18}.  Moreover, as pointed out
in~\cite{MFR-APA-PT:21,XT-DVD:21}, this design might introduce
undesired equilibria in the closed-loop system, which can even be
asymptotically stable. To the best of the authors' knowledge, only
local stability guarantees exist~\cite[Theorem
3]{XT-DVD:21},\cite[Theorem 1]{MJ:18}, and no estimates of the region
of attraction are available in the literature.

An alternative design, e.g.,~\cite{LW-ADA-ME:17},
assumes a nominal (possibly unsafe) stabilizing controller $u_{\nom}$
is available, and seeks to modify it as little as possible while
guaranteeing safety. This can be done by solving the following QP:
\begin{align}\label{eq:unom-qp}
  u(x)
  & =  \argmin{u\in\real^{m}}{\frac{1}{2}\norm{u-u_{\nom}(x)}^2},
  \\
  \notag
  & \qquad \text{s.t.} \quad \eqref{eq:cbf-ineq}.
\end{align}
In general, the resulting modified controller might not retain the
stability properties of the original nominal controller but, under
certain conditions~\cite{WSC-DVD:21}, one can provide an estimate of
the region of attraction of the equilibrium. Interestingly, nominal
controllers other than the given one might result in larger regions of
attraction, so in this sense the design directly with the CLF offers
greater flexibility.

We are interested in building an alternative to the
designs~\eqref{eq:clf-cbf-qp},~\eqref{eq:unom-qp} to solve the
aforementioned issues.  In particular, we tackle the following problem:

\begin{problem}\label{pb:problem-1}
  Determine a Lipschitz control law $u$ and a region of
  attraction $\Gamma\subseteq\real^{n}$, $\Gamma\cap\mathcal{C}\neq0$
  such that for all $x(0)\in\Gamma\cap\mathcal{C}$,
  $x(t)\in\mathcal{C}$ for all $t\geq0$ and the
  system~\eqref{eq:control-affine-sys} in closed-loop with $u$ is
  asymptotically stable with respect to the origin.  \problemfinal
\end{problem}

\section{Safety and Stability via QP with
  Penalty}\label{sec:design}

In this section we design a candidate control law to solve
Problem~\ref{pb:problem-1} by leveraging the
CLF $V$ and the CBF~$h$. We first present our exposition in a general
context, then particularize to our setting.  Consider general
Lipschitz functions $a,c : \real^{n}\to\real$ and
$b,d : \real^{n}\to\real^{m}$. Consider the following two affine
inequalities in $u\in\real^{m}$,
\begin{align*}
  a(x)+b(x)u\leq0 , \quad
  c(x)+d(x)u\leq0 .  
\end{align*}
Given a neighborhood $\bar{\Cc}$ of $\Cc$, we assume
  that for every $x\in\bar{\Cc}$, there exist $u_1,u_{2}\in\real^{m}$
  such that $a(x)+b(x)u_1\leq0$ and $c(x)+d(x)u_2\leq0$.
To select $u$, we regard at the first inequality as a \textit{soft
  constraint} and the second as a \textit{hard constraint}.
Inspired by the theory of penalty methods for constrained
optimization~\cite[Chapter 13]{DGL:84}, we formulate a QP where we
include the soft constraint in the objective function with a penalty
parameter ($\epsilon>0$) and enforce the hard constraint.  The
resulting solution of the QP is parametrized by $x\in\real^{n}$ and
$\epsilon$:
\begin{align}\label{eq:usafe-qp}
  \notag
  u_{\epsilon}(x)
  & :=  \argmin{u\in\real^m}
  {\frac{1}{2}\norm{u}^2}+\frac{1}{\epsilon}(a(x)+b(x)u),
  \\
  & \qquad \text{s.t.} \quad c(x)+d(x)u\leq0.
\end{align}
Since this optimization problem is a QP, it is convex.  The following
result gives a closed-form expression for $u_{\epsilon}$ and
establishes that it is Lipschitz.

\begin{proposition}\longthmtitle{Closed-form expression for Lipschitz
    controller}\label{prop:usafe-formula}
  Let $a,c : \real^{n}\to\real$ and $b,d : \real^{n}\to\real^{m}$ be
  Lipschitz, $\bar{\Cc}$ a neighborhood of $\Cc$ and
    assume that for every $x\in\bar{\Cc}$, there exist
    $u_1, u_{2}\in\real^{m}$ such that $a(x)+b(x)u_1\leq0$ and
    $c(x)+d(x)u_2\leq0$. For each $x \in \Cc$, let
  $H(x):=\setdef{u\in\real^m}{c(x)+d(x)u=0}$ and
  $e(x) := c(x)-\frac{1}{\epsilon}d(x)b(x)$. Then,
  \begin{align}\label{eq:usafe}
    u_{\epsilon}(x)=
    \begin{cases}
      -\frac{1}{\epsilon}b(x) & \text{if } e(x)\leq0,
      \\
      P_{H(x)}(-\frac{1}{\epsilon}b(x)) & \text{if } e(x) >0,
    \end{cases}
  \end{align}
  and $u_{\epsilon}$ is Lipschitz on
  $\bar{\Cc}\backslash\{0\}$. Moreover, if $d(0)\neq0$, $u_{\epsilon}$
  is Lipschitz at $0$.
\end{proposition}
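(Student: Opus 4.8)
The plan is to solve the QP~\eqref{eq:usafe-qp} explicitly by a case analysis on whether the hard constraint is active at the unconstrained minimizer, and then to verify Lipschitzness by examining the two branches and the interface between them. First I would observe that the objective $\frac{1}{2}\norm{u}^2 + \frac{1}{\epsilon}(a(x)+b(x)u)$ is, up to the constant $\frac{1}{\epsilon}a(x)$ and the additive constant $\frac{1}{2\epsilon^2}\norm{b(x)}^2$, equal to $\frac{1}{2}\norm{u+\frac{1}{\epsilon}b(x)}^2$; hence the unconstrained minimizer is $u^\star = -\frac{1}{\epsilon}b(x)$, and the constrained minimizer is simply the Euclidean projection of $u^\star$ onto the half-space $\setdef{u}{c(x)+d(x)u\le 0}$. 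If $u^\star$ already satisfies the constraint, i.e.\ $c(x)+d(x)(-\frac{1}{\epsilon}b(x)) = e(x) \le 0$, the minimizer is $u^\star$ itself, giving the first branch. Otherwise $e(x)>0$, the constraint is active, and the projection onto the half-space coincides with the projection onto its bounding hyperplane $H(x)$, giving the second branch; this uses the standard fact that projecting onto a half-space either leaves the point fixed or lands on the boundary. The notation $P_{H(x)}$ is exactly the hyperplane-projection formula defined in the paper's notation footnote, with $a \leftarrow d(x)$, $b \leftarrow -c(x)$, so $P_{H(x)}(v) = v - \frac{d(x)v + c(x)}{\norm{d(x)}^2}d(x)$.

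Next I would establish Lipschitzness on $\bar{\Cc}\setminus\{0\}$. On this set $b,d$ are Lipschitz and $d(x)\ne 0$, so $x\mapsto \norm{d(x)}^2$ is Lipschitz and bounded away from zero on any compact subset; therefore $x\mapsto \frac{1}{\norm{d(x)}^2}$ is locally Lipschitz, and each of the two branch expressions in~\eqref{eq:usafe} is locally Lipschitz as a composition and product of locally Lipschitz functions (note $-\frac{1}{\epsilon}b(x)$ is Lipschitz, and $e(x)$ is Lipschitz). The function $e$ is continuous, so the two regions $\{e\le 0\}$ and $\{e>0\}$ are closed/open and their common boundary lies in $\{e=0\}$. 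The key gluing observation is that when $e(x)=0$ we have $c(x)+d(x)(-\frac{1}{\epsilon}b(x))=0$, i.e.\ $-\frac{1}{\epsilon}b(x)\in H(x)$, so $P_{H(x)}(-\frac{1}{\epsilon}b(x)) = -\frac{1}{\epsilon}b(x)$: the two branch formulas agree on $\{e=0\}$. A standard patching lemma — two locally Lipschitz functions on closed sets covering the domain that agree on the overlap glue to a locally Lipschitz function — then yields local Lipschitzness of $u_\epsilon$; combined with continuity one upgrades to Lipschitz on compact subsets of $\bar{\Cc}\setminus\{0\}$ as claimed.

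Finally, for the statement at $0$ under the hypothesis $d(0)\ne 0$: by continuity $d(x)\ne 0$ on a whole neighborhood of $0$ (shrinking $\bar{\Cc}$ if necessary), so $\norm{d(x)}^2$ is bounded away from zero near $0$ and the argument above goes through verbatim with $0$ included — the only reason $0$ had to be excluded in general was the hypothesis $b(x),d(x)\ne0$ only on $\bar{\Cc}\setminus\{0\}$, and $d(0)\ne0$ removes that gap for $d$; the factor $b$ never caused trouble since $-\frac1\epsilon b$ is globally Lipschitz regardless. The main obstacle I anticipate is the gluing step: one must be careful that local Lipschitz constants for the two branches do not blow up as $x$ approaches the interface, which is handled precisely because $\norm{d(x)}^2$ stays bounded below on the relevant compact neighborhoods, and that the set where the branches are evaluated is covered by the two closed pieces $\{e\le0\}$ and $\{e\ge0\}$ so the patching lemma applies cleanly.
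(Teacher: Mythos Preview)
Your argument is correct. The derivation of the closed form via completing the square and recognizing the problem as Euclidean projection onto the half-space $\{u:c(x)+d(x)u\le 0\}$ is exactly equivalent to the KKT computation the paper invokes, just phrased geometrically; both yield the same two-branch formula with switching surface $\{e=0\}$.

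Where you genuinely depart from the paper is in the Lipschitzness argument. The paper does not carry out any branch-by-branch analysis: it simply appeals to a general sensitivity result for parametric quadratic programs (Luenberger, \emph{Optimization by Vector Space Methods}, Section~3.10, Theorem~2), which guarantees that the minimizer of a strongly convex quadratic objective subject to affine inequality constraints depends Lipschitz-continuously on the data. Your route is instead fully self-contained: you show each branch is locally Lipschitz because $\norm{d(x)}^{2}$ is bounded away from zero on compacta, observe the two formulas coincide on $\{e=0\}$, and glue using the patching lemma together with the intermediate-value property of $e$ along segments. This buys independence from an external reference and makes explicit exactly why the hypothesis $d(x)\neq 0$ (and, at the origin, $d(0)\neq 0$) is what drives regularity, whereas $b$ plays no role beyond being Lipschitz. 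The paper's approach buys brevity and situates the result inside a general parametric-optimization framework, which would scale more easily if additional affine constraints were added. Both are valid; yours is more informative for this specific single-constraint setting.
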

\begin{proof}
  The expression~\eqref{eq:usafe} follows by calculating the KKT
  points of~\eqref{eq:usafe-qp}.  Note that~\eqref{eq:usafe} is well
  defined because if $d(x)=0$, necessarily
  $e(x)=c(x)\leq0$. Lipschitzness of $u_{\epsilon}(x)$ follows
  from~\cite[Section 3.10, Theorem 2]{DGL:69}, which as a special case
  includes the minimization of a quadratic cost function subject to
  affine inequality constraints.
\end{proof}

We next particularize the general design~\eqref{eq:usafe-qp} to our
setup. We consider two cases:
\\
\textbf{Safety QP with stability penalty:} The selection
$a(x)=L_{f}V(x)+W(x)$, $b(x)=L_{g}V(x)$,
$c(x)=-L_{f}h(x)-\alpha(h(x))$, and $d(x)=-L_{g}h(x)$ makes the CLF
inequality~\eqref{eq:clf-ineq} a soft constraint and the CBF
inequality~\eqref{eq:cbf-ineq} a hard one. We denote by
$u_{\epsilon}^{\safe}$ the controller resulting
from~\eqref{eq:usafe-qp}. If $L_gh(0)\neq0$,
Proposition~\ref{prop:usafe-formula} guarantees that
$u_{\epsilon}^{\safe}$ is Lipschitz on $\Cc$. Moreover, since it
satisfies the CBF inequality~\eqref{eq:cbf-ineq} for all $x \in \Cc$,
the resulting closed-loop system is safe for all $\epsilon>0$;
\\
\textbf{Stability QP with safety penalty:} Alternatively, the
selection $a(x)=-L_{f}h(x)-\alpha(h(x))$, $b(x)=-L_{g}h(x)$,
$c(x)=L_{f}V(x)+W(x)$, and $d(x)=L_{g}V(x)$, makes the CBF
inequality~\eqref{eq:cbf-ineq} a soft constraint and the CLF
inequality~\eqref{eq:clf-ineq} a hard one.  We denote by
$u_{\epsilon}^{\stable}$ the resulting controller
from~\eqref{eq:usafe-qp}. In this case, $d(0)=0$ and hence
Proposition~\ref{prop:usafe-formula} only guarantees that
$u_{\epsilon}^{\stable}$ is Lipschitz in
$\bar{\Cc}\backslash\{0\}$. Moreover, since~\eqref{eq:clf-ineq} is
satisfied for all $x\in\bar{\Cc}\backslash\{0\}$, the origin is
asymptotically stable for the resulting closed-loop system.

From this point onwards, we formulate the results for the controller
$u_{\epsilon}^{\safe}$. With minor modifications, similar results can
be stated for $u_{\epsilon}^{\stable}$.  Note also that
Proposition~\ref{prop:usafe-formula} provides a closed-form expression
for the controllers. This allows the closed-loop system to be
implemented without having to continuously solve the
optimization~\eqref{eq:usafe-qp}, which is something one faces
with~\eqref{eq:clf-cbf-qp}, e.g.,~\cite{ADA-SC-ME-GN-KS-PT:19}. The
expression~\eqref{eq:usafe} indicates that smaller $\epsilon$ lead to
controllers with larger norms. Even though here the input is
unconstrained, this should be taken into account in applications with
limited actuation power.

\begin{remark}\longthmtitle{Nominal Controller}
  Our framework can be adapted to the scenario described
  in~\eqref{eq:unom-qp}, where instead of a CLF, one has access to a
  nominal stabilizing controller $u_{\nom}$ and a certificate of
  stability in the form of a Lyapunov function $V$ satisfying
  $L_{f}V(x)+L_{g}V(x)u_{\nom}(x)+W(x)\leq0$ for $x\in\Dc$, with $\Dc$
  some open set.  To design a control $u$ as close as possible to
  $u_{\nom}$ that is safe and stabilizing, one can set
  $v=u-u_{\nom}$. Then, it is easy to check that $V$ is a CLF for
  $\dot{x}=\bar{f}(x)+g(x)v$, where
  $\bar{f}(x)=f(x)+g(x)u_{\nom}(x)$. In this case, one could use the
  safety QP with stability penalty setting
  $a(x)=L_{\bar{f}}V(x)+W(x)$, $b(x)=L_{g}V(x)$,
  $c(x)=-L_{\bar{f}}h(x)-\alpha(h(x))$, and $d(x)=-L_{g}h(x)$.
  \demo
\end{remark}

\section{Analysis of Safety QP with Stability
  Penalty}\label{sec:analysis}

Here, we analyze the closed-loop properties
of~\eqref{eq:control-affine-sys} under $u_{\epsilon}^{\safe}$. We
first show how to choose $\epsilon$ to avoid undesired equilibria of
the closed-loop system and then go on to solve
Problem~\ref{pb:problem-1}. Throughout the
section,
\begin{align*}
  e(x) = -L_{f}h(x)+\frac{1}{\epsilon}L_gh(x)^T L_gV(x)-\alpha(h(x)) .
\end{align*}

\subsection{Ruling out Undesired Equilibrium Points}

Here we show that the closed-loop implementation of the safety QP with
stability penalty controller might introduce new equilibria other than
the origin.  The next result characterizes such equilibria and shows
that, under some conditions, they can be confined to an arbitrarily
small neighborhood of the origin for small enough~$\epsilon$.

\begin{proposition}\longthmtitle{Characterization of
    Equilibria}\label{prop:undes-eq} 
  For $\epsilon>0$, the set of equilibrium points of the closed-loop
  system $\dot{x}=f(x)+g(x)u_{\epsilon}^{\safe}(x)$ in $\mathcal{C}$
  is
  $\mathcal{Q}=\mathcal{Q}_{1}^\epsilon\cup\mathcal{Q}_{2}^\epsilon$,
  with
  \begin{align*}
    \mathcal{Q}_1^{\epsilon}
    &:=\setdef{x\in\mathcal{C}}{e(x)\leq0, \
      f(x)=\frac{1}{\epsilon}g(x)L_gV(x)},
    \\
    \mathcal{Q}_2^{\epsilon}
    &:=\setdef{x\in\partial\mathcal{C}}{{ e(x)}>0, \ f(x) =
      \frac{L_fh(x)}{\norm{L_gh(x)}^2}g(x) L_gh (x)  
    \\
    &
      +
      \frac{g(x)}{\epsilon}(L_gV(x)-\frac{L_gh(x)^T L_gV(x)}{\norm{L_gh(x)}^2}
      L_gh(x))},
  \end{align*}
  and $0\in\mathcal{Q}_{1}^{\epsilon}$. Let
    $\mathcal{V}$ be a neighborhood of the origin, $\bar{\mathcal{V}}$
    a neighborhood of
    $P_{g}:=\setdef{x\in\Cc\backslash\{0\}}{L_gV(x)=0}$ and let
  $N_{1}, N_{2}$, $N_{3}^{\mathcal{V},\bar{\mathcal{V}}}$ and $N_4$ be
  defined by
  \begin{align*}
    N_{1}&:=\sup_{x\in\mathcal{C}} \norm{f(x)}, 
    \\
    N_{2}&:=\sup_{\substack{x\in\partial\Cc \\ e(x)>0}}
    \norm{f(x)-\frac{L_fh(x)}{\norm{L_gh(x)}^2}g(x)L_gh(x)}, 
    \\
    N_{3}^{\mathcal{V},\bar{\mathcal{V}}}&:=\inf_{x\in \mathcal{C\backslash(\mathcal{V}\cup\bar{\mathcal{V}})}} 
                                           \norm{g(x)L_gV(x)}.
    \\
    N_{4}&:=\inf_{\substack{x\in\partial\Cc \\ e(x)>0}}
    \norm{g(x)(L_gV(x) -
    \frac{L_gh(x)^T L_gV(x)}{\norm{L_gh(x)}^2}L_gh(x))}.
  \end{align*}
  then,
  \begin{itemize}
  \item if $N_{1}$ is finite, then
    $\mathcal{Q}_1^\epsilon\subseteq\mathcal{V}$ for all
    $0<\epsilon<\frac{N_3^{\mathcal{V},\bar{\mathcal{V}}}}{N_1}$,
  \item if $N_{2}$ is finite and $N_{4}$ is positive, then
    $\mathcal{Q}_{2}^{\epsilon}=\emptyset$ for
    $0<\epsilon<\frac{N_4}{N_2}$.
  \end{itemize}
\end{proposition}
\begin{proof}
  Since $u_{\epsilon}^{\safe}(x)$ takes a different form depending on
  the sign of $e(x)$, we distinguish two cases:
  \\
  \textbf{Case 1: $e(x)\leq0$}: In this case, the equilibrium points
  of the closed-loop system satisfy
  $f(x)=\frac{1}{\epsilon}g(x)L_{g}V(x)$. Note that if
    $g(x)L_gV(x)=0$, by multiplying on the left by $\nabla V(x)^{T}$
    we obtain $L_gV(x)=0$.  Since $V$ is a CLF, $L_fV(x)<0$ if
    $x\neq0$. This implies that $f(x)\neq0$ and hence $x$ is not an
    equilibrium point. Hence, no point other than the origin satisfies
    $L_{g}V(x)=0$ and $f(x)=\frac{1}{\epsilon}g(x)L_{g}V(x)$, and we
    can choose a neighborhood $\bar{\mathcal{V}}$ of $P_{g}$ with
    $\mathcal{Q}_{1}^{\epsilon}\cap\bar{\mathcal{V}}=\emptyset$. Now,
  by taking any neighborhood $\mathcal{V}$ of the origin, the choice
  $\epsilon<\frac{N_3^{\mathcal{V},\bar{\mathcal{V}}}}{N_1}$ rules out
  any equilibrium of this kind in
  $\mathcal{C}\backslash\mathcal{V}$. Note that, since $f(0)=0$ and
  $\nabla V(0)=0$, we have $e(0)=-\alpha(h(0))\leq0$, and hence
  $0\in\mathcal{Q}_{1}^{\epsilon}$.
  \\
  \textbf{Case 2: $e(x)>0$}: In this case the equilibrium points of
  the closed-loop system satisfy
  \begin{multline}\label{eq:case2-eq}
    f(x) -\frac{L_fh(x)+\alpha(h(x))}{\norm{L_gh(x)}^2}g(x)L_gh(x) =
    \\
    = \frac{g(x)}{\epsilon}(L_gV(x)-\frac{L_gh(x)^T
      L_gV(x)}{\norm{L_gh(x)}^2}L_gh(x)) .
  \end{multline}
  Let us show that these equilibria can only occur in
  $\partial\mathcal{C}$.  Multiplying both sides
  of~\eqref{eq:case2-eq} by $\nabla h(x)^{T}$, we obtain
  $-\alpha(h(x))=0$.  Since $\alpha$ is a class $\mathcal{K}_{\infty}$
  function, this can only occur when $h(x)=0$, i.e.,
  $x\in\partial\mathcal{C}$. Now, by taking
  $\epsilon<\frac{N_4}{N_2}$, all equilibrium points of these kind are
  ruled out.
\end{proof}

Note that the assumption that $N_{1}$ and $N_{2}$ are finite in
Proposition~\ref{prop:undes-eq} is satisfied if $\mathcal{C}$ is
bounded.  The neighborhood $\mathcal{V}$ of the origin
  in the statement can be taken arbitrarily small and,
  consequently, if $N_4$ is positive, the controller
$u_{\epsilon}^{\safe}$ with sufficiently small
$\epsilon$ confines the equilibria of the closed-loop system
arbitrarily close to the origin. However, as $\mathcal{V}$ gets
arbitrarily small, $N_3^{\mathcal{V}}$ (and hence $\epsilon$) could
also get arbitrarily small. In Corollary~\ref{cor:conv-origin} later,
we give sufficient conditions to ensure that this does not happen.

\begin{remark}\longthmtitle{Existence of boundary
    equilibria}\label{rem:bdy-eq}
  The assumption that $N_{4}$ is positive is not satisfied if
  $g(x)L_{g}V(x)$ and $g(x)L_{g}h(x)$ are linearly dependent. In this
  scenario, using condition~\eqref{eq:case2-eq}, we infer that the
  equilibrium points in $\partial\mathcal{C}$ that cannot be removed
  by tuning $\epsilon$ are those where $f(x)$, $g(x)L_{g}V(x)$ and
  $g(x)L_{g}h(x)$ are collinear and $e(x)>0$ for all~$\epsilon$.
  \demo
\end{remark}

\subsection{Incompatibility and Region of Attraction}
Here we show that $u_{\epsilon}^{\safe}$ solves
Problem~\ref{pb:problem-1}. The flexibility provided by the design
parameter $\epsilon$ is instrumental in doing so.  We first introduce
a characterization of points where the CLF and the CBF are
incompatible, the proof of which follows as a special
  case of~\cite[Theorem 1]{XX:18}.

\begin{lemma}\longthmtitle{Characterization of incompatible
    points}\label{lem:incompat-points} 
  Let $\mathcal{D} \subseteq \real^{n}$ be open, $\Cc \subset \Dc$
  closed, $V$ a CLF on $\mathcal{D}$ and $h$ a CBF of~$\Cc$. $V$ and
  $h$ are incompatible at $x\in\Cc$ if and only if $L_{g}V(x)$ and
  $L_{g}h(x)$ are linearly dependent, $L_{g}V(x)^T L_{g}h(x)>0$ and
  $L_{f}V(x)+W(x)>\frac{L_gV(x)^T L_gh(x)}{\norm{L_gh(x)}^2}(L_fh(x)
  +\alpha(h(x)))$.
\end{lemma}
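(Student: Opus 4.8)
The plan is to work directly with the definition of incompatibility: $V$ and $h$ are incompatible at $x \in \Cc$ precisely when the system of two affine-in-$u$ inequalities,
\begin{align*}
  L_fV(x)+L_gV(x)u \leq -W(x), \qquad
  L_fh(x)+L_gh(x)u+\alpha(h(x))\geq 0,
\end{align*}
has no common solution $u \in \real^m$. I would rewrite these as $L_gV(x)u \leq -(L_fV(x)+W(x))$ and $-L_gh(x)u \leq L_fh(x)+\alpha(h(x))$, i.e. as two halfspaces in $u$-space with normal directions $L_gV(x)$ and $-L_gh(x)$ (both nonzero on $\bar\Cc\setminus\{0\}$ by the standing assumption). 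Two halfspaces in $\real^m$ fail to intersect if and only if their boundary hyperplanes are parallel with oppositely-directed outward normals and the halfspaces point "away" from each other — equivalently, $L_gV(x)$ and $L_gh(x)$ are parallel, they point in the same direction (so that the constraint normals $L_gV(x)$ and $-L_gh(x)$ are anti-parallel), and the offsets are ordered the wrong way. This geometric dichotomy is the conceptual core, and making it precise is the main step.

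Concretely, I would argue as follows. If $L_gV(x)$ and $L_gh(x)$ are linearly independent, the two gradients span a $2$-dimensional subspace and one can always find $u$ making $L_gV(x)u$ as negative as desired while simultaneously controlling $L_gh(x)u$ independently; hence the inequalities are simultaneously satisfiable and $x$ is compatible. So incompatibility forces linear dependence: $L_gh(x) = \lambda L_gV(x)$ for some scalar $\lambda$ (using $L_gV(x)\neq 0$; if $x$ were such that $L_gV(x)=0$, i.e. $x=0$, compatibility is immediate since $W(0)=0$, $\alpha(h(0))\geq 0$). Substituting, the CBF inequality becomes $L_fh(x) + \lambda L_gV(x)u + \alpha(h(x)) \geq 0$. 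Now split on the sign of $\lambda$: if $\lambda \le 0$, then $-\lambda \geq 0$ and one can take $L_gV(x)u \to -\infty$ to satisfy both inequalities, so $x$ is compatible; thus incompatibility forces $\lambda > 0$, which (since $L_gh(x)^T L_gV(x) = \lambda \norm{L_gV(x)}^2$) is exactly the condition $L_gV(x)^T L_gh(x) > 0$. Finally, with $\lambda>0$ fixed, let $s := L_gV(x)u$ range over all of $\real$; the two constraints read $s \leq -(L_fV(x)+W(x))$ and $s \geq -\tfrac{1}{\lambda}(L_fh(x)+\alpha(h(x)))$, which have a common solution iff $-\tfrac{1}{\lambda}(L_fh(x)+\alpha(h(x))) \leq -(L_fV(x)+W(x))$, i.e. iff $L_fV(x)+W(x) \leq \tfrac{1}{\lambda}(L_fh(x)+\alpha(h(x)))$. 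Negating, incompatibility is equivalent to $L_fV(x)+W(x) > \tfrac{1}{\lambda}(L_fh(x)+\alpha(h(x)))$, and since $\tfrac{1}{\lambda} = \tfrac{\norm{L_gV(x)}^2}{L_gh(x)^TL_gV(x)} \cdot \tfrac{1}{\norm{L_gV(x)}^2}$... more directly $\tfrac{1}{\lambda} = \tfrac{L_gV(x)^T L_gh(x)}{\norm{L_gh(x)}^2 \cdot \text{(something)}}$ — I would rewrite $\tfrac{1}{\lambda}$ cleanly as $\tfrac{L_gV(x)^T L_gh(x)}{\norm{L_gh(x)}^2}$ by using $L_gh(x)=\lambda L_gV(x)$ twice, recovering exactly the stated inequality.

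I expect the main obstacle to be purely organizational rather than deep: handling the degenerate cases cleanly (the point $x=0$ or, more generally, any point where one of the Lie derivatives of $g$ vanishes — excluded by the standing assumption on $\bar\Cc\setminus\{0\}$), and being careful that "linear dependence with $L_gV(x)\neq 0$" genuinely lets me write $L_gh(x)=\lambda L_gV(x)$ with a well-defined scalar $\lambda$ whose sign is controlled by the sign of $L_gV(x)^T L_gh(x)$. The algebraic identity converting $1/\lambda$ into $\tfrac{L_gV(x)^T L_gh(x)}{\norm{L_gh(x)}^2}$ is routine. The only genuinely substantive point is the first reduction — that linear independence of the two gradients always yields compatibility — which follows because the linear map $u \mapsto (L_gV(x)u, L_gh(x)u)$ is then surjective onto $\real^2$, so both affine inequalities can be met with strict slack.
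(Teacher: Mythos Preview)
Your proposal is correct and follows essentially the same argument as the paper: both proofs first dispose of the linearly independent case by surjectivity of $u\mapsto(L_gV(x)u,\,L_gh(x)u)$, then in the dependent case reduce both constraints to a single scalar variable and split on the sign of the proportionality factor. The only cosmetic difference is that the paper parametrizes as $L_gV(x)=\mu(x)L_gh(x)$ with $\mu(x)=\tfrac{L_gV(x)^TL_gh(x)}{\|L_gh(x)\|^2}$ directly (so the target coefficient appears from the start), whereas you parametrize the other way and recover $\tfrac{1}{\lambda}=\tfrac{L_gV(x)^TL_gh(x)}{\|L_gh(x)\|^2}$ at the end---which, as you note, follows from $L_gh(x)=\lambda L_gV(x)$ via $\|L_gh(x)\|^2=\lambda\, L_gV(x)^TL_gh(x)$.
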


The next result shows that, by taking $\epsilon$ sufficiently small
for the closed-loop system, any level set of $V$ that
does not contain incompatible points is a region of attraction
of a neighborhood of the origin.

\begin{theorem}\longthmtitle{Parameter tuning for guaranteed region of
    attraction}\label{thm:epsilon-roa}
  Let $\mathcal{D} \subseteq \real^{n}$ be open, $\Cc \subset \Dc$
  closed, $V$ a CLF on $\mathcal{D}$ and $h$ a CBF of~$\Cc$. Let
  $\nu>0$ be such that the sublevel set
  $\Gamma_{\nu}=\setdef{x\in \real^n}{V(x)\leq\nu}$ does not contain
  any incompatible points. For $x$ such that $e(x)>0$
    (which implies $L_gh(x)\neq0$ since $h$ is a CBF), define
  {\relsize{-0.95}
    \begin{align*}
      B(x)&:=L_fV(x)\!+\!W(x)\!-\!\frac{L_fh(x)+\alpha(h(x))}{\norm{L_gh(x)}^2}L_gV(x)^T
            L_gh(x),
      \\ 
      C(x)&:=\frac{(L_gV(x)^T
            L_gh(x))^2}{\norm{L_gh(x)}^2}-\norm{L_gV(x)}^2. 
    \end{align*}
  } Let $\mathcal{V}$ be a neighborhood of the origin,
  $\bar{\mathcal{V}}$ a neighborhood of
    $P_{g}:=\setdef{x\in\mathcal{C}\backslash\{0\}}{L_gV(x)=0}$ such
    that $L_{f}V(x)+W(x)\leq0$ for all $x\in\bar{\mathcal{V}}$ and
  $\mathcal{W}$ a neighborhood of
  $P_{\nu}=\setdef{x\in\Gamma_\nu}{e(x)>0, C(x)=0}$
  such that $e(x)>0$ and $B(x)\leq 0$
  for all $x\in\mathcal{W}\backslash\{0\}$. Define constants
  $M_{1}^{\nu}$, $M_{2}^{\nu}$,
  $M_{3}^{\nu,\mathcal{V},\bar{\mathcal{V}}}$ and
  $M_{4}^{\nu,\mathcal{V},\mathcal{W}}$ by
  \begin{align*}
    M_{1}^{\nu}&:=\sup_{x\in\Gamma_\nu}|L_fV(x)+W(x)|,
    \\
    M_{2}^{\nu}
               &:=\sup_{\substack{x\in\Gamma_\nu \\ e(x)>0}}|\frac{L_fh(x)+\alpha(h(x))}{\norm{L_gh(x)}^2}L_gh(x)^T
                 L_gV(x)|,
    \\
    M_{3}^{\nu,\mathcal{V},\bar{\mathcal{V}}}
    &:=\inf_{x\in\Gamma_\nu\backslash(\mathcal{V}\cup\mathcal{\bar{\mathcal{V}}})}\norm{L_gV(x)}^2,
      \\
    M_{4}^{\nu,\mathcal{V},\mathcal{W}}
    &:=\inf_{\substack{x\in\Gamma_\nu\backslash(\mathcal{W}\cup\mathcal{V})\\ e(x)>0}}
      |C(x)|.  
  \end{align*}
  Then, for
  $\epsilon<\bar{\epsilon}:=
  \min\{\frac{M_{4}^{\nu,\mathcal{V},\mathcal{W}}}{M_1^\nu+M_{2}^\nu}
  , \frac{M_{3}^{\nu,\mathcal{V},\bar{\mathcal{V}}}}{M_{1}^\nu}\}$,
  $\mathcal{V}$ is asymptotically stable and $\Gamma_{\nu}\cap \Cc$ is
  forward invariant and a subset of the region of attraction
  of~$\mathcal{V}$.
\end{theorem}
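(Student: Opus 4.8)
The plan is to show two things separately: (i) forward invariance of $\Gamma_\nu \cap \Cc$, and (ii) asymptotic stability of $\mathcal V$ with $\Gamma_\nu \cap \Cc$ inside its region of attraction. Safety (forward invariance of $\Cc$) is immediate since $u_\epsilon^{\safe}$ satisfies the CBF inequality~\eqref{eq:cbf-ineq} on $\Cc$; so the crux is controlling the behavior of $V$ along closed-loop trajectories. First I would compute $\dot V = L_f V(x) + L_g V(x) u_\epsilon^{\safe}(x)$ in the two regimes of the closed-form expression~\eqref{eq:usafe}. In the regime $e(x) \le 0$ one has $u_\epsilon^{\safe}(x) = -\tfrac{1}{\epsilon} L_g V(x)$ (recalling $b(x) = L_g V(x)$ in the safety-QP choice), so $\dot V = L_f V(x) - \tfrac{1}{\epsilon}\norm{L_g V(x)}^2$; this is negative outside $\mathcal V$ once $\epsilon < M_3^{\nu,\mathcal V}/M_1^\nu$, using $|L_f V + W| \le M_1^\nu$ on $\Gamma_\nu$ and $\norm{L_g V}^2 \ge M_3^{\nu,\mathcal V}$ off $\mathcal V$ (and $W > 0$). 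In the regime $e(x) > 0$, $u_\epsilon^{\safe}(x) = P_{H(x)}(-\tfrac1\epsilon L_g V(x))$; substituting the projection formula and simplifying, $\dot V$ becomes $B(x) + \tfrac1\epsilon C(x)$, where $B,C$ are exactly the functions defined in the statement — $C(x) \le 0$ always (Cauchy–Schwarz), and the incompatibility characterization of Lemma~\ref{lem:incompat-points} is precisely the statement that $B(x) > 0$ forces $C(x) = 0$ (linear dependence). Since $\Gamma_\nu$ contains no incompatible points, $B(x) > 0 \Rightarrow C(x) = 0 \Rightarrow x \in P_\nu \subseteq \mathcal W$, so on $\Gamma_\nu \setminus \mathcal W$ we have $B(x) \le 0$; and on $\mathcal W \setminus \{0\}$ we have $B(x) < 0$ by the choice of $\mathcal W$.

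Next I would patch these estimates together on $\Gamma_\nu \setminus \mathcal V$. The only delicate region is where $e(x) > 0$ and $B(x) \le 0$: there $\dot V = B(x) + \tfrac1\epsilon C(x) \le \tfrac1\epsilon C(x)$, and since on $\Gamma_\nu \setminus (\mathcal W \cup \mathcal V)$ we have $|C(x)| \ge M_4^{\nu,\mathcal V,\mathcal W} > 0$, this is strictly negative (the hypothesis $\{x \in \Gamma_\nu \setminus \{0\} : B(x) = 0, C(x) = 0\} = \emptyset$ guarantees $M_4^{\nu,\mathcal V,\mathcal W}$ can be taken positive after possibly shrinking $\mathcal W$ around $P_\nu$, because $C = 0$ on $\Gamma_\nu \setminus \{0\}$ implies $B \ne 0$, hence $B < 0$ on a neighborhood). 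On the remaining piece where $e(x) > 0$ and $x \in \mathcal W \setminus \mathcal V$, one has $\dot V = B(x) + \tfrac1\epsilon C(x) \le B(x) < 0$ since $C(x) \le 0$. Combining with Case 1, for $\epsilon < \bar\epsilon$ we get $\dot V(x) < 0$ for all $x \in (\Gamma_\nu \cap \Cc) \setminus \mathcal V$.

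From here the topological conclusions follow by standard Lyapunov/LaSalle arguments. Forward invariance of $\Gamma_\nu \cap \Cc$: $\Cc$ is forward invariant by the CBF property, and $\Gamma_\nu$ is forward invariant because $\dot V < 0$ on $\partial \Gamma_\nu \setminus \mathcal V$ (assuming $\mathcal V \subsetneq \Gamma_\nu$, so the origin's neighborhood does not reach the level set $\{V = \nu\}$, which I would note can be arranged by taking $\mathcal V$ small); trajectories starting in the compact set $\Gamma_\nu \cap \Cc$ (compactness from properness of $V$) cannot cross either boundary outward. Then, since $V$ is positive definite, proper on $\Gamma_\nu$, and strictly decreasing outside $\mathcal V$, every trajectory starting in $\Gamma_\nu \cap \Cc$ enters $\mathcal V$ in finite time and, by forward invariance of any sublevel set of $V$ containing $\mathcal V$, stays arbitrarily close — this gives asymptotic stability of $\mathcal V$ and that $\Gamma_\nu \cap \Cc$ lies in its region of attraction.

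The main obstacle I anticipate is the regime-$e(x)>0$ computation: correctly expanding $L_g V(x) \, P_{H(x)}(-\tfrac1\epsilon L_g V(x))$ using $P_H(v) = v - \tfrac{\langle a,v\rangle - \beta}{\norm a^2} a$ with $a = d(x) = -L_g h(x)$ and $\beta$ determined by $c(x) = -L_f h(x) - \alpha(h(x))$, and verifying that the algebra collapses exactly to $B(x) + \tfrac1\epsilon C(x)$. The second subtlety is ensuring the constants $M_4^{\nu,\mathcal V,\mathcal W}$ and $M_3^{\nu,\mathcal V}$ are genuinely positive — this is where the no-incompatible-points hypothesis on $\Gamma_\nu$, the emptiness of $\{B = 0, C = 0\}$ off the origin, and the definition of $\mathcal W$ as a neighborhood of $P_\nu$ with $B < 0$ there all get used; compactness of the sets over which the infima are taken (closed subsets of the compact $\Gamma_\nu$, using continuity of $B$, $C$) is what makes the infima attained and hence strictly positive.
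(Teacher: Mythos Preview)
Your overall architecture matches the paper's: compute $z_\epsilon(x) := L_f V + L_g V\, u_\epsilon^{\safe} + W$ in both regimes of $e(x)$, identify it with $L_f V + W - \tfrac{1}{\epsilon}\norm{L_g V}^2$ when $e\le 0$ and with $B(x) + \tfrac{1}{\epsilon}C(x)$ when $e>0$, and drive it nonpositive on $\Gamma_\nu \setminus \mathcal{V}$ by choosing $\epsilon$ small. (Minor slip: in regime two the quantity equal to $B+\tfrac1\epsilon C$ is $\dot V + W$, not $\dot V$, since $B$ already contains~$W$; this is harmless for the sign arguments.)

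The genuine gap is your reading of Lemma~\ref{lem:incompat-points}. You assert that compatibility gives ``$B(x)>0 \Rightarrow C(x)=0$'', but the lemma says incompatibility is the \emph{conjunction} of linear dependence (i.e., $C(x)=0$), $L_g V^T L_g h > 0$, and $B(x)>0$. Hence a point with $B(x)>0$ and $L_g V(x), L_g h(x)$ linearly independent (so $C(x)<0$) is perfectly compatible, and your conclusion ``$B(x)\le 0$ on $\Gamma_\nu \setminus \mathcal{W}$'' is false in general. Under your (incorrect) claim the threshold $M_4^{\nu,\mathcal V,\mathcal W}/(M_1^\nu+M_2^\nu)$ would be superfluous; in reality this is exactly where it does work. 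On $\Gamma_\nu \setminus (\mathcal{W}\cup\mathcal{V})$ one only has $|B(x)| \le M_1^\nu + M_2^\nu$ and $C(x)\le -M_4^{\nu,\mathcal{V},\mathcal{W}}<0$, and the $\epsilon$ threshold yields $\tfrac{1}{\epsilon}|C(x)| > |B(x)|$, whence $B(x)+\tfrac1\epsilon C(x)<0$ \emph{regardless} of the sign of $B(x)$. The paper runs the implication from the lemma in the correct direction---on $P_\nu\setminus\{0\}$, compatibility together with the hypothesis $\{B=0,C=0\}=\emptyset$ forces $B<0$---but uses this only to justify that a neighborhood $\mathcal{W}$ with $B<0$ \emph{exists}; on the complement of $\mathcal{W}$ no sign control on $B$ is available and the crude bound above is what is used.
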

\begin{proof}
  Let $z_{\epsilon}(x):=L_fV(x)+L_gV(x)u_{\epsilon}^{\safe}(x)+W(x)$.
  It follows from~\eqref{eq:usafe} that
  \begin{align*}
    z_{\epsilon}(x)=\begin{cases}
      L_fV(x)+W(x)-\frac{1}{\epsilon}\norm{L_gV(x)}^2  &  \text{if} \
      e(x)\leq0,
      \\
      B(x)+\frac{1}{\epsilon}C(x)  &  \text{if} \
      e(x) >0.
    \end{cases}
  \end{align*}
  We show that $z_{\epsilon}(x)\leq0$ for all
  $x\in\mathcal{C}\backslash\mathcal{V}$ if $\epsilon<\bar{\epsilon}$,
  from which the result follows. First, note that
    $\bar{\mathcal{V}}$ as required in the statement exists because
    $V$ is a CLF and hence, any point $x \neq 0$ that satisfies
    $L_{g}V(x)=0$ is such that $L_{f}V(x)+W(x)<0$ (without loss of
    generality, since if $L_{f}V(x)+W(x)=0$ we can take
    $\tilde{W}(x)=\frac{1}{2}W(x)$). Hence, by continuity there exists
    a neighborhood $\bar{\mathcal{V}}$ of $P_{g}$ where
    $L_fV(x)+W(x)-\frac{1}{\epsilon}\norm{L_gV(x)}^2\leq
    L_fV(x)+W(x)<0$ for all $x\in\bar{\mathcal{V}}$, for any
    $\epsilon>0$. Hence by taking $\epsilon<\bar{\epsilon}$, we ensure that $z_{\epsilon}(x)\leq0$ for all $x\in\bar{\mathcal{V}}$ independently of the sign of $e(x)$.  Note also that $\mathcal{W}$ as required in the
  statement exists because $\Gamma_{\nu}$ does not contain any point
  where $V$ and $h$ are incompatible and therefore by
  Lemma~\ref{lem:incompat-points}, all points in $\Gamma_{\nu}$
  satisfying $C(x)=0$ necessarily also satisfy $B(x)<0$
    (without loss of generality, using a similar argument as
    above). Therefore, by continuity of $B(x)$ for any $\epsilon>0$
  we can take a neighborhood $\mathcal{W}$ around $P_{\nu}$ so that
  $B(x)+\frac{1}{\epsilon}C(x)\leq B(x)\leq0$ for all
  $x\in\mathcal{W}$ (since by Cauchy-Schwartz's inequality,
  $C(x)\leq0$). Hence, by taking $\epsilon<\bar{\epsilon}$, independently of whether $e(x)\leq 0$ or $e(x)>0$ we ensure
  that $z_{\epsilon}(x)\leq0$ for all
  $x\in\mathcal{W}\cup\bar{\mathcal{V}}$.
  Now we argue that if $\epsilon<\bar{\epsilon}$, $z_{\epsilon}(x)\leq0$
  for all
  $x \in \Gamma_{\nu}
  \backslash(\mathcal{W}\cup\mathcal{V}\cup\bar{\mathcal{V}})$. Note
  that
  $\Gamma_{\nu}
  \backslash(\mathcal{W}\cup\mathcal{V}\cup\bar{\mathcal{V}})$ does
  not contain any points where $L_{g}V(x)$ and $L_{g}W(x)$ are
  linearly dependent, since that would imply $C(x)=0$ and hence
  $x\in\mathcal{W}$. Thus, by Cauchy-Schwartz's inequality, $C(x)<0$
  for all
  $x\in\Gamma_{\nu}
  \backslash(\mathcal{W}\cup\mathcal{V}\cup\bar{\mathcal{V}})$. Hence,
  $M_{4}^{\nu,\mathcal{V},\mathcal{W}}>0$.  Note also that
  $M_3^{\nu,\mathcal{V},\bar{\mathcal{V}}}>0$. Therefore, regardless
  of whether $e(x)\leq0$ or $e(x)>0$, by taking
  $\epsilon<\bar{\epsilon}$ we ensure that $z_{\epsilon}(x)\leq0$ for
  all
  $x\in\Gamma_{\nu}\backslash(\mathcal{W}\cup\mathcal{V}\cup\bar{\mathcal{V}})$,
  as claimed.  Moreover, since by construction $u_{\epsilon}^{\safe}$
  satisfies~\eqref{eq:cbf-ineq} and is Lipschitz, by~\cite[Theorem
  2]{ADA-SC-ME-GN-KS-PT:19}, trajectories stay inside $\mathcal{C}$
  for all $t\geq0$.
\end{proof}

Note that in the statement of
  Theorem~\ref{thm:epsilon-roa}, one can pick $\mathcal{V}$
arbitrarily small, which might require an arbitrarily small
$\epsilon$. The next result states that under some
  additional reasonable assumptions,
this does not happen and hence there exists a finite $\epsilon$ for
which trajectories converge to the origin.

\begin{corollary}\longthmtitle{Convergence to the
    origin}\label{cor:conv-origin}
  Under the same assumptions and notation of
  Theorem~\ref{thm:epsilon-roa}, assume additionally that
    $f,g\in\Cc^{1}(\real^n)$, $V\in\Cc^{2}(\real^n)$,
    $0\in\textrm{int}(\Cc)$ and
    $\ker(g(0)^T)\subseteq \mathcal{V}_{s}(\frac{\partial f}{\partial
      x}(0))$.  Then, for
    $\epsilon<\hat{\epsilon}:=
    \min\{\frac{\bar{\lambda}_{\min}(g(0)g(0)^T\nabla^2
      V(0))}{|\lambda_{\max}(\frac{\partial f}{\partial x}(0))|},
    \bar{\epsilon}\}$, the origin is asymptotically stable and
    $\Gamma_{\nu}\cap C$ is forward invariant and a subset of the
    region of attraction of the origin.
\end{corollary}
\begin{proof}
    Since $0\in\textrm{int}(\Cc)$, $e(0)<0$ and the
    Jacobian of the closed-loop system evaluated at $0$ is
    $J=\frac{\partial f}{\partial
      x}(0)-\frac{1}{\epsilon}g(0)g(0)^{T}\nabla^{2}V(0)$.  We show
    that, with $\epsilon<\hat{\epsilon}$, one has $v^TJv<0$ for
    $v\in\real^n\backslash\{0\}$.  First, consider
    $v\in\ker(g(0)^T)$. By assumption,
    $v\in\mathcal{V}_s(\frac{\partial f}{\partial x}(0))$, and hence
    $v^TJv=v^{T}\frac{\partial f}{\partial x}(0)v<0$. Now, assume
    $v\notin\ker(g(0)^T)$. Since $\nabla^2 V(0)$ is positive definite
    and $g(0)g(0)^T$ is positive semidefinite,
    $\ker(g(0)g(0)^{T}\nabla^{2}V(0))=\ker(g(0)g(0)^T)$ and
    $g(0)g(0)^{T}\nabla^{2}V(0)$ has non-negative
    eigenvalues~\cite[7.2.P21]{RAH-CRJ:12}. Hence,
    $v^TJv \leq (\lambda_{\max}(\frac{\partial f}{\partial
      x}(0))-\frac{1}{\epsilon}\bar{\lambda}_{\min}(g(0)g(0)^T\nabla^2
    V(0))) \|v\|^{2}$. This implies that $J+J^T$ is
      negative definite, and since the real parts of its eigenvalues
      are twice those of $J$, we obtain that $J$ is Hurwitz.
    Therefore, we can take $\mathcal{V}$ in
    Theorem~\ref{thm:epsilon-roa} such that the closed-loop
    trajectories with
    $\epsilon <
    \frac{\bar{\lambda}_{\min}(g(0)g(0)^T\nabla^2V(0))}{|\lambda_{\max}(\frac{\partial
        f}{\partial x}(0))|}$ starting at $\mathcal{V}$ converge
    to~$0$.  Finally, reasoning as in Theorem~\ref{thm:epsilon-roa},
  $V$ is decreasing on $\Gamma_{\nu}\backslash\mathcal{V}$, and the
  result follows.
\end{proof}

Under the assumptions of Corollary 5.4, by ensuring that the origin is
asymptotically stable in $\Gamma_{\nu}$, we rule out the existence of
equilibrium points in $\Gamma_{\nu}$ other than the origin. If the
conditions of Corollary~\ref{cor:conv-origin} are not satisfied or
$\epsilon\geq\hat{\epsilon}$, other undesired behaviors like limit
cycles or convergence to undesired equilibria like the ones found in
Proposition~\ref{prop:undes-eq} cannot be ruled out.
Theorem~\ref{thm:epsilon-roa} and Corollary~\ref{cor:conv-origin}
solve Problem~\ref{pb:problem-1}. Under the stated assumptions, by
taking $u_{\epsilon}^{\safe}$ with $\epsilon<\hat{\epsilon}$ as a safe
stabilizing controller, an inner approximation of the region of
attraction of the origin is the largest level set of $V$ that does not
contain any incompatible points inside it.  In particular, if there
exists a sublevel set of $V$ that contains $\Cc$,
$u_{\epsilon}^{\safe}$ with $\epsilon<\hat{\epsilon}$ safely
stabilizes the origin and the whole safe set $\mathcal{C}$ is in its
region of attraction.

\section{Simulations}\label{sec:sims}
Here, we compare the stability QP with safety penalty controller with
the CLF-CBF QP~\eqref{eq:clf-cbf-qp} and its modification, M-CLF-CBF
QP, introduced in~\cite[Theorem~3]{XT-DVD:21} to avoid undesired
equilibria. We focus on the following planar system
\begin{align}\label{eq:example}
  \begin{pmatrix} \dot{x}_1 \\ \dot{x}_2 \end{pmatrix}
  =\begin{pmatrix} x_1 \\ x_2 \end{pmatrix} +
  \begin{pmatrix} 1 & 0 \\ 0 & 1 \end{pmatrix}u.
\end{align}
For this system,
$V(x_1,x_2) = \frac{1}{2}x_{1}^{2}+\frac{1}{2}x_{2}^{2}$ is a CLF.
The safe set $\Cc$ is the complement of the ball
$\setdef{x\in\real^2}{\norm{x-(0,4)}\leq 2}$, and we use the CBF
$h(x_1,x_2)=x_{1}^{2}+(x_2-4)^{2}-4$, with $\alpha(s)=s$.  According
to~\cite{XT-DVD:21}, the CLF-CBF QP~\eqref{eq:clf-cbf-qp} creates
undesired equilibria in $\text{int}(\mathcal{C})$ for all values of
$p$. Instead, both M-CLF-CBF QP and the stability QP with safety
penalty controller $u_{\epsilon}^{\safe}$, with $\epsilon\neq1$, do
not introduce undesired equilibria in $\text{int}(\mathcal{C})$.  The
latter can be checked from the definition of
$\mathcal{Q}_{1}^{\epsilon}$ given in Proposition~\ref{prop:undes-eq}.
In this example, the incompatible points are given by
$\setdef{(x_1,x_2)\in\real^2}{x_1=0,x_2>4}$. Therefore, the
approximation of the region of attraction given by
Theorem~\ref{thm:epsilon-roa} is
$\Gamma_{2} = \setdef{x\in\real^2}{\norm{x}^2<4}$.
Figure~\ref{fig:example} shows that the stability QP with safety
penalty controller and M-CLF-CBF QP behave similarly, whereas CLF-CBF
QP~\eqref{eq:clf-cbf-qp} fails to stabilize the origin. The plot also
illustrates that trajectories starting at $(0,9)$ converge to the
boundary equilibrium point at $(0,6)$ for all three approaches (this
corresponds to a point where $f, gL_{g}V$, and $gL_{g}h$ are
collinear, cf. Remark~\ref{rem:bdy-eq}). This is not
  surprising since, for scenarios where the unsafe set is bounded,
  global convergence with a smooth vector field is impossible due to
  topological obstructions~\cite{PB-CMK:17}. An advantage of the
approach proposed here is the explicit inner approximation of the
region of attraction which, as Figure~\ref{fig:example} shows, is
conservative.

\begin{figure}[htb]
  \centering
  \includegraphics[width=0.4\textwidth]{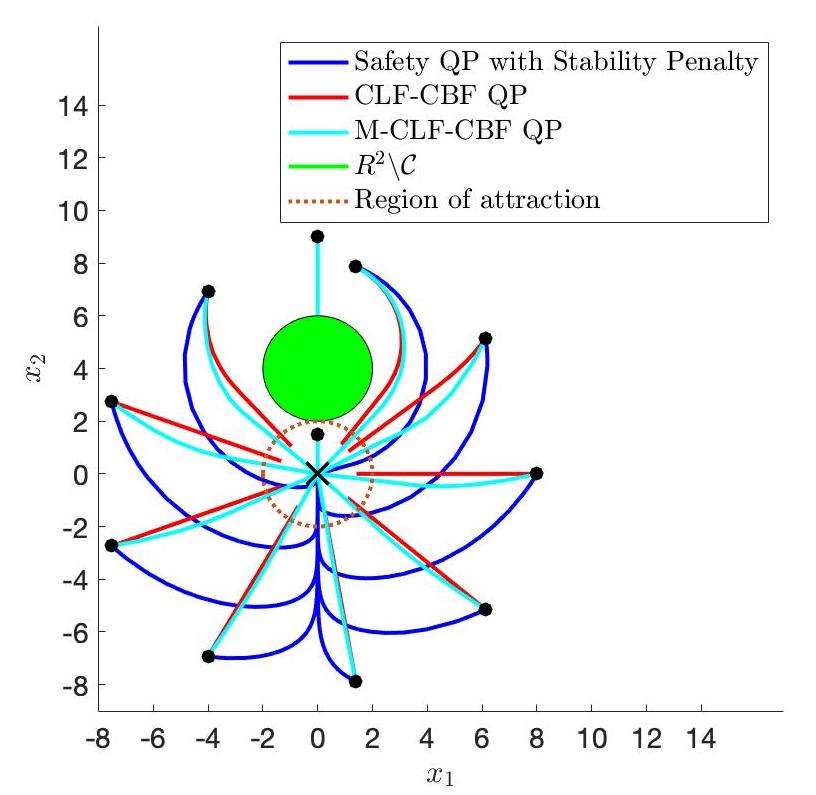}
  \caption{Safe stabilization of a planar system.  The green ball is
    the set of unsafe states and the small dots display ten initial
    conditions for the system trajectories under the CLF-CBF QP, the
    M-CLF-CBF QP, and the safety QP with stability penalty
    controllers. The orange dotted curve marks the boundary of the
    estimate $\Gamma_{2}$ of the region of attraction. The CLF-CBF QP
    controller (with $p=1$) preserves safety but does not reach the
    origin because of undesired equilibrium points.  The safety QP
    with stability penalty (with $\epsilon=0.01$) and the M-CLF-CBF QP
    (with $p=1$) preserve safety and have trajectories converge to the
    origin, except for the one starting at $(0,9)$.}\label{fig:example}
\end{figure}

\section{Conclusions}\label{sec:conclusions}
We have addressed the problem of safe stabilization of nonlinear
affine control systems by proposing an optimization-based feedback
design framework inspired by penalty methods for constrained
optimization. Our design enforces strictly either stability or safety
via a hard constraint while promoting the satisfaction of the other
property via a soft constraint. We have characterized the equilibria
of the closed-loop system under the proposed controllers. We have
shown how to tune the penalty parameter to eliminate spurious
equilibria and to increase the region of attraction to all Lyapunov
level sets that do not include points where the CLF and the CBF are
not compatible. Future work will develop tighter estimates of the
region of attraction, consider extra design parameters and explore the
extension of the proposed framework to generalized notions of CBFs.

\end{document}